\newcommand{\scr}{\mathcal}
\newcommand{\mb}{\mathbb}
\newcommand{\til}{\widetilde}
\newcommand{\eps}{\epsilon}
\renewcommand{\Pr}{\mb{P}}
\newtheorem{theorem}{Theorem}
\newtheorem{corollary}[theorem]{Corollary}
\theoremstyle{definition}
\newtheorem{Remark}[theorem]{Remark}
\newcommand{\mc}[1]{\mathcal{#1}}
\newcommand{\rbrac}[1]{\left(#1\right)}
\newcommand{\cbrac}[1]{\left\{ #1\right\}}
\newcommand{\sbrac}[1]{\left[ #1\right]}
\newcommand{\tbf}[1]{\textbf{#1}}
\def\E{\mathbb{E}}
\def\Var{\mbox{{\bf Var}}}
\def\Pr{\mathbb{P}}
\def\nn{\nonumber}
   \def\D{\Delta}
\def\eps{\varepsilon}
\def\t{\tau}
\def\yt{{\tilde y}}
\title{
Extending Wormald's Differential Equation Method to One-sided Bounds}
\date{}
\begin{document}

\author{
Patrick Bennett\thanks{Department of Mathematics, Western Michigan University (\texttt{patrick.bennett@wmich.edu}). Supported in part by Simons Foundation Grant \#426894.}
\and
Calum MacRury
\thanks{Graduate School of Business, Columbia University, New York, USA,
\texttt{cm4379@columbia.edu}. The majority of the author's work done on this paper was while they were affiliated with the Department of Computer Science at the University of Toronto.}
}
\maketitle

\begin{abstract}
    In this note, we formulate a ``one-sided'' version of Wormald's differential equation method. In the standard ``two-sided'' method, one is given a family of random variables which evolve over time
    and which satisfy some conditions including a tight estimate of the expected change in each variable over one time step. These estimates for the expected one-step changes suggest that the variables ought to be close to the solution of a certain system of differential equations, and the standard method concludes that this is indeed the case. We give a result for the case where instead of a tight estimate for each variable's expected one-step change, we have only an upper bound. Our proof is very simple, and is flexible enough that if we instead assume tight estimates on the variables, then we recover the conclusion of the standard differential equation method.

\end{abstract}

\section{Introduction} \label{sec:intro}

In the most basic setup of Wormald's differential equation method,
one is given a sequence of random variables $(Y(i))_{i=0}^{\infty}$ derived from some random process
which evolves step by step. The random variables $(Y(i))_{i=0}^{\infty}$ all implicitly depend on some $n \in \mb{N}$, and the goal is understand their typical behaviour as $n \rightarrow \infty$. 

Our running example is based on
the Erdős–Rényi \textbf{random graph process} $(G_i)_{i=0}^{m}$ on vertex set $[n] :=\{1,\ldots,n\}$ where 
$G_{i}=([n], E_i)$ and $m,n \in \mb{N}$. Here $G_0 = ([n], E_0)$ is the empty graph, and $G_{i+1}$ is constructed
from $G_i$ by drawing an edge $e_{i+1}$ from $\binom{[n]}{2} \setminus E_i$ uniformly at random (u.a.r.), and
setting $E_{i+1} := E_{i} \cup \{e_{i+1}\}$. 
Suppose that we wish to understand the size of the matching produced by
the greedy algorithm as it executes on $(G_i)_{i=0}^m$. More specifically, when $e_{i+1}$ arrives, the greedy algorithm adds $e_{i+1}$ to the current
matching if the endpoints of $e_{i+1}$ were not previously matched. We will let $m=cn$, i.e. we will add a linear number of random edges. Observe that if $Y(i)$ 
is the number of edges of $G_i$ matched by the algorithm,
then $Y(i)$ is a function of $e_1, \ldots ,e_i$ (formally, $Y(i)$ is $\scr{H}_i$-measurable where $\scr{H}_i$ is the sigma-algebra generated from $e_1, \ldots ,e_i$). Then for $i < m$, 
\begin{equation} \label{eq:greedy_expected_changes}
    \mb{E}[ \Delta Y(i) \mid \scr{H}_i] =  \frac{\binom{n-2Y(i)}{2}}{\binom n2 - i} = \left(1 - \frac{2 Y(i)}{n}\right)^{2} + O\rbrac{\frac 1n },
\end{equation}
where $\Delta Y(i):= Y(i+1) - Y(i)$, and the asymptotics are as $n \rightarrow \infty$ (which will be the case for the remainder of this note). By scaling $(Y(i))_{i=0}^{m}$ by $n$, we can interpret the left-hand side of
\eqref{eq:greedy_expected_changes} as the ``derivative'' of $Y(i)/n$ evaluated
at $i/n$. This suggests the following \textit{differential equation}:
\begin{equation} \label{eqn:greedy_differential_equation}
    y'(t) = (1-2y(t))^2, \qquad y(0)=0
\end{equation}
with initial condition $y(0)=0$. Wormald's differential equation method allows us to conclude that \textbf{with high probability} (i.e.~with probability tending to $1$ as $n \rightarrow \infty$, henceforth abbreviated w.h.p.),
\begin{equation} \label{eq:greedy_performance}
    Y(m) = (1 + o(1)) y(m/n),
\end{equation}
where $y(t) := t/(1+2t)$ is the unique solution to \eqref{eqn:greedy_differential_equation}.

Returning to the general setup of the differential equation method, suppose we are given a sequence of random variables $(Y(i))_{i=0}^{\infty}$ which implicitly depend on $n \in \mb{N}$. 
Assume that the one-step changes are bounded, i.e., there exists a constant $\beta \ge 0$
such that $|\Delta Y(i)| \le \beta$ for each $i \ge 0$.
Moreover, suppose each $Y(i)$ is determined by some sigma-algebra $\scr{H}_i$,
and its \textit{expected} one-step changes are described by some Lipshitz
function $f=f(t,y)$. That is, for each $i \ge 0$,
\begin{equation}\label{eq:trend}
    \mb{E}[\Delta Y(i) \mid \scr{H}_i] = f(i/n, Y(i)/n) + o(1).
\end{equation}
If $Y(0) = (1 +o (1)) \til{y} n$ for some constant $\til{y}$, and $m=m(n)$ is not too large,
then the differential equation method allows us to conclude that w.h.p. $Y(m) /n = (1 + o(1)) y(m/n)$ for $y$ which satisfies the differential equation suggested by \eqref{eq:trend}, i.e. 
\[
y'(t) = f(t, y(t))
\]
with initial condition $y(0) = \til{y}$.
In this note, we consider the case when we have an inequality in place of \eqref{eq:trend}. We are motivated by applications to online algorithms in which one wishes to upper bound the performance of \textit{any} online algorithm, opposed to just a \textit{particular} algorithm. (See \Cref{sec:toy_example} for an example pertaining to online matching
in $(G_i)_{i=0}^m$ as well as some discussion of further applications). We are also motivated by the existence of deterministic results of which we wanted to prove a random analogue. For example, consider
the following classical result due to  Petrovitch \cite{MichelPetrovitchSurUM}:
\begin{theorem}\label{thm:comparison1D}
Suppose $f:\mathbb{R}^2 \rightarrow \mathbb{R}$ is Lipschitz continuous, and $y=y(t)$ satisfies 
\begin{equation*}
    y'(t) = f(t, y(t)), \qquad y(c) = y_0. 
\end{equation*}
Suppose $z=z(t)$ is differentiable and satisfies 
\begin{equation*}
    z'(t) \le f(t, z(t)), \qquad z(c) = z_0 \le y_0. 
\end{equation*}
Then $z(t) \le y(t)$ for all $t \ge c$. 
\end{theorem}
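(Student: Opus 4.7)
The plan is to use the standard strict-perturbation trick: introduce a family of perturbed upper solutions, show a strict comparison against each of them, and then pass to the limit.

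First I would introduce, for each $\eps > 0$, an auxiliary function $y_\eps = y_\eps(t)$ satisfying the perturbed initial value problem
\[
y_\eps'(t) = f(t, y_\eps(t)) + \eps, \qquad y_\eps(c) = y_0 + \eps.
\]
By Picard existence/uniqueness (using the Lipschitz hypothesis on $f$), $y_\eps$ exists and is unique on any bounded subinterval of $[c, \infty)$ where $y$ exists, and standard continuous dependence on parameters (e.g.\ a Gr\"onwall argument applied to $y_\eps - y$) gives that $y_\eps(t) \to y(t)$ as $\eps \to 0^+$, uniformly on compact subsets of $[c,\infty)$.

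Second, I would establish the strict inequality $z(t) < y_\eps(t)$ for every $t \ge c$. At $t = c$ this holds since $z(c) = z_0 \le y_0 < y_0 + \eps = y_\eps(c)$. Suppose for contradiction that the set $\{t > c : z(t) \ge y_\eps(t)\}$ is nonempty, and let $t^*$ be its infimum. By continuity $z(t^*) = y_\eps(t^*)$ and $z(t) < y_\eps(t)$ for $c \le t < t^*$. Setting $h(t) := z(t) - y_\eps(t)$, we have $h$ differentiable at $t^*$ with $h(t) < 0$ for $t < t^*$ and $h(t^*) = 0$, forcing $h'(t^*) \ge 0$. On the other hand,
\[
h'(t^*) = z'(t^*) - y_\eps'(t^*) \le f(t^*, z(t^*)) - f(t^*, y_\eps(t^*)) - \eps = -\eps < 0,
\]
where I used $z(t^*) = y_\eps(t^*)$ to cancel the $f$ terms. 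This is a contradiction, so $z(t) < y_\eps(t)$ holds for all $t \ge c$.

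Finally, fixing any $t \ge c$ and sending $\eps \to 0^+$, continuous dependence yields $y_\eps(t) \to y(t)$, so $z(t) \le y(t)$. The only subtlety worth flagging is the first-crossing argument: we need $z$ differentiable (given by hypothesis) so that the sign of $h'(t^*)$ can be read off both from the one-sided difference quotients and from the differential inequality; without the strict perturbation one only gets $z' \le y'$ at a crossing, which is too weak to force a contradiction. The $+\eps$ in $y_\eps'$ provides exactly the slack needed, and this is the main (only real) obstacle.
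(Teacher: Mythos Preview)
Your proof is correct. Both your argument and the paper's (which proves the multi-dimensional \Cref{thm:comparisonmulti}, of which \Cref{thm:comparison1D} is the $a=1$ special case) follow the same high-level scheme: introduce a slack parameter, run a first-crossing argument, and let the parameter tend to zero. The execution differs in where the Lipschitz/Gr\"onwall work is done. You perturb the \emph{solution}, defining $y_\eps$ via $y_\eps' = f(t,y_\eps)+\eps$, so that at a hypothetical first crossing the $f$-terms cancel exactly and the contradiction is immediate; the cost is that you must then invoke continuous dependence (a Gr\"onwall estimate) to pass from $y_\eps$ back to $y$. The paper instead perturbs the \emph{envelope}, comparing $z$ directly to $y(t)+g(t)$ for an explicit $g(t)=2\delta e^{Lt}$; at the crossing one uses the Lipschitz constant to bound $f(t,z)-f(t,y)$ by $Lg(t)$, and the choice of $g$ is rigged so that $Lg-g'\le 0$. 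The limiting step is then trivial since $g(t)\to 0$ as $\delta\to 0$. In effect the two proofs are dual: yours hides the exponential Gr\"onwall growth inside the continuous-dependence lemma, while the paper makes it explicit in $g$. The paper's version has the advantage of being self-contained and of generalizing more transparently to the cooperative multi-dimensional setting and to the random-variable analogue that is the paper's main goal.
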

With the above result in mind (as well as the standard differential equation method), it's natural to wonder what can be said about a sequence of random variables $(Z_i)_{i=1}^\infty$ satisfying
\begin{equation*}
    \mb{E}[\Delta Z_i \mid \scr{H}_i] \le f(i/n, Z_i/n)
\end{equation*}
instead of the equality version \eqref{eq:trend}. More precisely, if $(Y_i)_{i=1}^\infty$ satisfies \eqref{eq:trend} and $Z_0 < Y_0$ then should it not follow that we likely have $Z_i \le Y_i$ (perhaps modulo some relatively small error term) for all $i \ge 0$?

We briefly point out that if $f$ is nonincreasing in its second variable, then the problem described in the previous paragraph is much easier. Indeed, whenever the random variable satisfies $Z_i - Y_i \le 0$, it is also a supermartingale. More precisely, when $Z_i \le Y_i$ we have that 
\[
\mb{E}[\Delta (Z_i - Y_i) \mid \mc{H}_i] \le f(i/n, Z_i/n) - f(i/n, Y_i/n) \le 0
\]
by the monotonicity assumption. In this case, assuming the initial gap $|Z_0-Y_0|$ is large enough, standard martingale techniques can be used to bound the probability that the supermartingale $Z_i - Y_i$ becomes positive. However, we would like to handle applications where we do not have this monotonicity assumption. For instance, in our running example, $f(t,z) = (1 -2z)^2$ is \textit{not} increasing in $z$.

Of course, the differential equation method in general deals with systems of random variables (and the associated systems of differential equations). So what can be said about systems of deterministic functions whose derivatives satisfy inequalities instead? It turns out that to generalize Theorem \ref{thm:comparison1D} to a system, we need the functions
to be \textbf{cooperative}. We say the functions $f_j:=\mathbb{R}^{a+1} \rightarrow \mathbb{R}, 1 \le j \le a$ are cooperative (respectively, \textbf{competitive}) if each $f_j$ is nondecreasing (respectively, nonincreasing) in all of its $a+1$ inputs except for possibly the first input and the $(j+1)^{th}$ one. In other words, $f_j(t, y_1, \ldots y_a)$ is nondecreasing in all variables except possibly $t$ and $y_j$. Note
that some sources refer to a system with the cooperative property as being \textbf{quasimonotonic}. Observe that in the one-dimensional case $a=1$, every function is cooperative/cooperate. The following theorem is folklore (see \cite{walter} for some relevant background, and \Cref{sec:proving_one_sided} for a proof):

\begin{theorem}\label{thm:comparisonmulti}
Suppose $f_j:\mathbb{R}^{a+1} \rightarrow \mathbb{R}, 1 \le j \le a$ are Lipschitz continuous and cooperative, and $y_j$ satisfies 
\begin{equation*} \label{eq:ydiffeq}
    y_j'(t) = f_j(t, y_1(t), \ldots, y_a(t)),\qquad 1 \le j \le a,\quad  t \ge c.
\end{equation*}
Suppose $z_j, 1 \le j \le a$ are differentiable and satisfy $z_j(c) \le y_j(c)$ and
\begin{equation*} \label{eq:zdiffeq}
    z_j'(t) \le f_j(t, z_1(t), \ldots, z_a(t)),  \qquad 1 \le j \le a,\quad t \ge c.
\end{equation*}
Then $z_j(t) \le y_j(t)$ for all $1 \le j \le a$, $t \ge c.$ 
\end{theorem}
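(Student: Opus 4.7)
The plan is to adapt the classical one-dimensional proof of Theorem \ref{thm:comparison1D} by introducing a small perturbation of the $y_j$ system and then running a ``first crossing time'' argument that is made rigorous by cooperativity. Specifically, for each $\varepsilon > 0$, I would let $y_{j,\varepsilon}$ denote the unique solution to the perturbed system
\[
y_{j,\varepsilon}'(t) = f_j(t, y_{1,\varepsilon}(t), \ldots, y_{a,\varepsilon}(t)) + \varepsilon, \qquad y_{j,\varepsilon}(c) = y_j(c) + \varepsilon,
\]
which exists and is unique on any compact interval by Lipschitz continuity of the $f_j$. The main claim is that $z_j(t) < y_{j,\varepsilon}(t)$ strictly for every $j$ and every $t \geq c$; the theorem then follows by letting $\varepsilon \to 0$ and using continuous dependence of ODE solutions on the initial data and on the forcing term.

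To prove the strict claim I would suppose, for contradiction, that it fails, and consider the first crossing time $T = \inf\{t \geq c : z_j(t) = y_{j,\varepsilon}(t) \text{ for some } j\}$. The initial strict gap $z_j(c) \leq y_j(c) < y_{j,\varepsilon}(c)$ together with continuity ensures that $T > c$, that $T$ is attained, and that there is an index $j^*$ with $z_{j^*}(T) = y_{j^*,\varepsilon}(T)$ while $z_k(t) < y_{k,\varepsilon}(t)$ for every $k$ and every $t \in [c, T)$. Since $z_{j^*} - y_{j^*,\varepsilon}$ rises from negative values to $0$ at $T$, its derivative at $T$ is nonnegative, so $z_{j^*}'(T) \geq y_{j^*,\varepsilon}'(T)$.

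The cooperativity hypothesis is precisely what is needed to contradict this at $T$. Because $f_{j^*}$ is nondecreasing in every coordinate $y_k$ with $k \neq j^*$, the inequalities $z_k(T) \leq y_{k,\varepsilon}(T)$ for $k \neq j^*$ combined with the equality $z_{j^*}(T) = y_{j^*,\varepsilon}(T)$ imply
\[
f_{j^*}(T, z_1(T), \ldots, z_a(T)) \leq f_{j^*}(T, y_{1,\varepsilon}(T), \ldots, y_{a,\varepsilon}(T)) = y_{j^*,\varepsilon}'(T) - \varepsilon.
\]
Combined with the assumed differential inequality $z_{j^*}'(T) \leq f_{j^*}(T, z_1(T), \ldots, z_a(T))$, this yields $z_{j^*}'(T) \leq y_{j^*,\varepsilon}'(T) - \varepsilon < y_{j^*,\varepsilon}'(T)$, contradicting the tangency inequality of the previous paragraph.

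The main obstacle, in my view, is less the crossing argument itself (which is conceptually clean once the $\varepsilon$-perturbation is in place) and more the recognition that a direct comparison at $\varepsilon = 0$ cannot work: without the additive $+\varepsilon$, two curves could merely touch tangentially at $T$ and yield $z_{j^*}'(T) \leq y_{j^*}'(T)$, which is \emph{consistent} with the tangency condition rather than contradicting it. The role of cooperativity is equally essential: if $f_{j^*}$ were allowed to be decreasing in some $y_k$ with $k \neq j^*$, the comparison of $f_{j^*}$ evaluated at the $z$-trajectory versus at the $y_\varepsilon$-trajectory would go the wrong way. Both of these design choices must be flagged carefully in a write-up. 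The final passage to the limit $\varepsilon \to 0$ then requires only a routine Gr\"onwall-type estimate using the Lipschitz constants of the $f_j$ on the compact interval under consideration.
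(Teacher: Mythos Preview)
Your argument is correct and follows the same broad strategy as the paper: perturb, run a first-crossing-time argument that is powered by cooperativity, then pass to the limit. The implementation is different in a way worth noting. The paper adds an explicit exponential buffer to the $y_j$'s, defining $g(t)=2\delta e^{Lt}$ and $s_j(t)=z_j(t)-(y_j(t)+g(t))$, and then shows $s_{j'}'\le Lg-g'=0$ on the offending interval using cooperativity together with the Lipschitz constant $L$ directly in the contradiction step; this yields the quantitative estimate $z_j(t)\le y_j(t)+2\delta e^{Lt}$, which is exactly the shape of bound the authors reuse when proving the random analogue (\Cref{thm:de_one_sided}). Your version instead perturbs the ODE satisfied by $y_j$ and compares $z_j$ to the perturbed solution $y_{j,\varepsilon}$; cooperativity alone (no Lipschitz constant) gives the strict contradiction at the crossing time, and Lipschitzness enters only at the very end, via a Gr\"onwall/continuous-dependence estimate, to show $y_{j,\varepsilon}\to y_j$. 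Your route is arguably cleaner for the deterministic theorem in isolation and makes the role of each hypothesis more transparent; the paper's route is chosen because its explicit buffer $g(t)$ and the resulting inequality $s_j'\le 0$ serve as a template for the Doob-decomposition argument in the main probabilistic theorem.
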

Cooperativity is necessary in the sense that if we do not have it, then one can choose initial conditions for the functions $y_j, z_j$ to make the conclusion of \Cref{thm:comparisonmulti} fail. Indeed, suppose we do not have cooperativity, i.e. there exist $j, j'$ with $j' \neq j+1$ and some points ${\bf p}, {\bf p}' \in \mathbb{R}^{a+1}$  that agree everywhere except for their $j'^{th}$ coordinate, where we have $p_{j'} > p_{j'}'$, and $f_j({\bf p}) < f_j(\bf{p'})$. Consider the following initial conditions: 
\[
\Big(c, y_1(c), \ldots, y_a(c)\Big) = {\bf p}, \qquad \Big(c, z_1(c), \ldots, z_a(c)\Big) = {\bf p}'.
\]
Then we have that $z_j(c) = y_j(c) = p_{j+1} = p'_{j+1}$. Furthermore, $z_j'(c)$ could be as large as $f_j({\bf p}') > f_j({\bf p}) = y_j'(c)$ in which case clearly $z_j(t) > y_j(t)$ for some $t>c$. 

Our main theorem in this paper, \Cref{thm:de_one_sided}, is essentially the random analogue of \Cref{thm:comparisonmulti}.
Before providing its formal statement, we expand  upon why it
is useful for proving \textit{impossibility/hardness} results for online algorithms. The reader
can safely skip \Cref{sec:toy_example} if they would first like to instead read \Cref{thm:de_one_sided}.

\subsection{Motivating Applications} \label{sec:toy_example}


The example considered in this section is closely related to the $1/2$-impossibility (or hardness) result for an online stochastic matching problem considered by the second author, Ma and Grammel in Theorem $5$ of \cite{MacRuryMG24}. In fact, 
\Cref{thm:de_one_sided} is used explicitly to prove this result, and it is also used in a similar way in Theorem $1.4$ of \cite{macruryinduction2023}. Our theorem can also be used to simplify
the proofs of the $\frac{1}{2}(1 + e^{-2})$-impossibility result of Fu, Tang, Wu, Wu and Zhang (Theorem $2$ in \cite{Fu2021}),
and the $1-\ln(2-1/e)$-impossibility result of Fata, Ma and Simchi-Levi (Lemma $5$ in \cite{Fata2019}).
All of the aforementioned papers prove impossibility results for various online stochastic optimization problems -- more specifically, hardness results for \textit{online contention resolution schemes} 
\cite{feldman2021online} or \textit{prophet inequalities} against an ``ex-ante relaxation'' \cite{Lee2018}. We think that \Cref{thm:de_one_sided} will find further applications as a technical
tool in this area.

Let us now return to the definition of the Erdős–Rényi random graph process $(G_i)_{i=0}^m$ as discussed in \Cref{sec:intro},
where we again assume that $m = c n$ for some constant $c > 0$.
Recall that \eqref{eq:greedy_performance} says that if $Y(m)$ is the size of the matching constructed by the
greedy matching algorithm when executed on $(G_i)_{i=0}^m$, then w.h.p. $Y(m)/n = (1 + o(1)) y(c)$ where $y(c) =  c/(1+2c)$. In fact, \eqref{eq:greedy_performance} can be made
to hold with probability $1 - o(1/n^2)$, and so $\mb{E}[ Y(m) ]/n = (1 +o(1)) c/(1+2c)$
after taking expectations.

The greedy matching algorithm
is an example of an \textbf{online (matching) algorithm} on $(G_i)_{i=0}^m$. An online algorithm begins with the empty matching on $G_0$, and its goal is to
build a matching of $G_{m}$. While it knows the distribution of $(G_i)_{i=0}^m$ upfront, it learns the \textit{instantiations} of the edges sequentially and must execute online. Formally, in each step $i \ge 1$, it
is presented $e_i$ and then makes an irrevocable decision as to whether or not to include $e_i$ in its current matching, based upon $e_1, \ldots , e_{i-1}$ and its previous matching decisions. Its output is the matching $M_m$, and its goal is to maximize $\mb{E}[ |M_m|]$. Here the expectation is over $(G_i)_{i=1}^{m}$ and any randomized decisions made by the algorithm.

Suppose that we wish to prove that the greedy algorithm is asymptotically optimal.
That is, for \textit{any} online algorithm, if $M_m$ is the matching it outputs
on $G_m$, then $\mb{E}[ |M_m|] \le (1 +o(1)) \mb{E}[ Y(m)]$. In order to prove this
directly, one must compare the performance of any online algorithm to the greedy algorithm.
This is inconvenient to argue, as there exist rare instantiations of $(G_i)_{i=0}^{m}$
in which being greedy is clearly sub-optimal.

We instead upper bound the performance of any online algorithm by $(1 + o(1)) y(c) n$. Let $(M_i)_{i=0}^{m}$
be the sequence of matchings constructed by an \textit{arbitrary} online algorithm while executing
on $(G_i)_{i=0}^{m}$. For simplicity, assume that the algorithm is deterministic so that
$M_i$ is $\scr{H}_i$-measurable. In this case, we can replace \eqref{eq:greedy_expected_changes} with inequality. I.e.,
if $Z(i):=|M_i|$, then for $i <m$,
\begin{equation} \label{eq:algorithm_expected_changes}
    \mb{E}[ \Delta Z(i) \mid \scr{H}_i] \le \left(1 - \frac{2Z(i)}{n}\right)^{2} + O\rbrac{\frac 1n}.
\end{equation}
Recall now the intuition behind the differential equation method.
If we scale $(Z(i))_{i=0}^{n}$ by $n$, then we can interpret the left-hand side of
\eqref{eq:algorithm_expected_changes} as the ``derivative'' of $Z(i)/n$ evaluated
at $i/n$. This suggests the following differential inequality:
\begin{equation*} \label{eq:differential_inequality}
    z' \le (1 - 2z)^2,
\end{equation*}
with inital condition $z(0)=0$.
By applying \Cref{thm:de_one_sided} to  $(Z(i))_{i=0}^{m}$,
we get that
\begin{equation*} \label{eq:algorithm_dominant}
    Z(m)/n \le (1+ o(1)) y(c)
\end{equation*}
with probability $1 - o(n^{-2})$. As a result, $\mb{E}[ Z(m)] \le (1 +o(1)) y(c) n$, 
and so we can conclude that greedy is asymptotically optimal.


\section{Main Theorem}

For any sequence $(Z(i))_{i=0}^{\infty}$ of random variables and $i \ge 0$, we will use the notation
$\Delta Z(i) := Z(i+1)-Z(i)$. Note that given a \textbf{filtration} $(\scr{H}_i)_{i=0}^{\infty}$
(i.e., a sequence of increasing $\sigma$-algebras), we say that $(Z_j(i))_{i= 0}^\infty$ is \textbf{adapted}
to $(\scr{H}_i)_{i=0}^{\infty}$, provided $Z_i$ is $\scr{H}_i$-measurable for each $i \ge 0$.
Finally, we say that a stopping time $I$ is \textbf{adapted} to $(\scr{H}_i)_{i=0}^{\infty}$, provided the event $\{I=i\}$
is $\scr{H}_i$-measurable for each $i \ge 0$.

Given $a \in \mb{N}$, suppose that $\scr{D} \subseteq \mb{R}^{a+1}$ is a bounded domain,
and for $1 \le j \le a$, let $f_j: \scr{D} \rightarrow \mb{R}$. We assume that the following hold for each $j$:
\begin{enumerate}[a)]
    \item \label{item:fjLipshitz} $f_j$ is $L$-Lipschitz on $\mc{D}$, i.e. 
    \[
    |f_j(t, y_1, \ldots, y_a) - f_j(t', y'_1, \ldots, y'_a) | \le L \cdot \max\{|t-t'|, |y_1-y'_1|, \ldots, |y_a-y'_a|\}
    \]
    \item \label{item:fjbounded} $|f_j|\le B$ on $\scr{D}$, and
    \item \label{item:cooperative} the $(f_j)_{j=1}^a$ are cooperative.
\end{enumerate}
 Given $(0,\yt_1, \ldots, \yt_a) \in \scr{D}$, assume that $y_1(t), \ldots, y_a(t)$ is the (unique) solution to the system:
\begin{equation}\label{eq:ysystem}
    y_j'(t) = f_j(t, y_1(t), \ldots, y_a(t)), \qquad y_j(0)=\yt_j
\end{equation}
for all $t \in [0, \sigma]$, where $\sigma$ is any positive value. 

\begin{theorem} \label{thm:de_one_sided}
Suppose that for each $1 \le j \le a$ we have a sequence of random variables $(Z_j(i))_{i= 0}^\infty$ which is adapted to some filtration  $(\mc{H}_i)_{i=0}^\infty$. Let $n \in \mb{N}$, and $\beta, b, \lambda, \delta > 0$
be any parameters such that $\lambda \ge \max \cbrac{ \beta + B , \frac{L+BL + \delta n}{3L}}$. 
Given an arbitrary stopping time $I \ge 0$ adapted to $(\mc{H}_i)_{i=0}^\infty$, suppose that the following properties hold for each $ 0 \le i  <  \min\{I, \sigma n \}$:
\begin{enumerate}
    \item  The `Boundedness Hypothesis': $\max_{j} |\Delta Z_j(i)| \le \beta$, and $\max_{j} \mb{E}[ ( \Delta Z_{j}(i) )^2 \mid \scr{H}_i] \le b$ \label{item:boundedness}
    \item  The `Trend Hypothesis': 
    $(i/n, Z_1(i)/n, \ldots Z_a(i)/n) \in \scr{D}$ and
    $$\E[ \D Z_j(i) \mid \mc{H}_i] \le  f_j(i/n, Z_1(i)/n, \ldots Z_a(i)/n)  + \delta $$
    
    for each $1 \le j \le a$. \label{item:expected_change}
    \item The `Initial Condition': $Z_j(0) \le y_j(0) n + \lambda $ for all $1 \le j \le a$. \label{item:initial}

\end{enumerate}
Then, with probability at least $1 - 2 a \exp\left( -\frac{\lambda^2}{2( b \sigma n + 2\beta \lambda )} \right)$,
 \begin{equation*}\label{eq:mainprob}
 Z_j(i) \le ny_{j}(i/n) + 3 \lambda e^{2L i/n} 
    \end{equation*}
for all $1 \le j \le a$ and $0 \le i \le \min\{I, \sigma n\}$.

\end{theorem}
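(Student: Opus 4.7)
The plan is to combine a Freedman-type concentration bound with a deterministic ``crossing-time'' argument in the spirit of \Cref{thm:comparisonmulti}. Write $\varepsilon(t) := 3\lambda e^{2Lt}$, so the desired conclusion \eqref{eq:mainprob} reads $Z_j(i) \le n y_j(i/n) + \varepsilon(i/n)$.

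First I would form, for each coordinate $j$, the Doob martingale $N_j(i) := Z_j(i) - Z_j(0) - \sum_{k=0}^{i-1} \E[\Delta Z_j(k) \mid \mc{H}_k]$, stopped at $I \wedge \lceil \sigma n \rceil$. The Boundedness Hypothesis gives $|\Delta N_j(k)| \le 2\beta$ and $\E[(\Delta N_j(k))^2 \mid \mc{H}_k] \le b$, so Freedman's inequality together with a union bound over the $a$ coordinates shows that
$$\mc{E} := \bigl\{\,\sup_{i \le I \wedge \lceil \sigma n\rceil} |N_j(i)| \le \lambda \text{ for all } j\,\bigr\}$$
has probability at least $1 - 2a\exp(-\lambda^2/(2(b\sigma n + 2\beta\lambda)))$. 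All remaining work takes place on $\mc{E}$.

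On $\mc{E}$ I would argue by contradiction: suppose $i^*$ is the smallest index at which $D_{j^*}(i^*) := Z_{j^*}(i^*) - n y_{j^*}(i^*/n) > \varepsilon(i^*/n)$ for some $j^*$. The Initial Condition rules out $i^* = 0$. Let $k^*$ be the largest index $k < i^*$ with $D_{j^*}(k) \le 0$ (or $k^* = -1$ if no such $k$): the one-step bound $|\Delta D_{j^*}| \le \beta + B \le \lambda$ or the Initial Condition then yields $D_{j^*}(k^*+1) \le \lambda$. On the crossing interval $k \in \{k^*+1, \ldots, i^*-1\}$ we have $0 < D_{j^*}(k) \le \varepsilon(k/n)$ and, by minimality of $i^*$, $Z_l(k) \le n y_l(k/n) + \varepsilon(k/n)$ for every $l$. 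Cooperativity of $f_{j^*}$ permits replacing each $Z_l(k)/n$ with $l \ne j^*$ by its upper envelope value $y_l(k/n) + \varepsilon(k/n)/n$, and $L$-Lipschitzness in the sup norm then gives $f_{j^*}(k/n, Z(k)/n) \le y_{j^*}'(k/n) + L\varepsilon(k/n)/n$; the crucial point is that on the crossing interval $|D_{j^*}(k)|/n \le \varepsilon(k/n)/n$, so the $j^*$-th coordinate does not dominate the sup norm. Combining this with the Trend Hypothesis, a Taylor expansion of $y_{j^*}$, and the lower bound $\lambda \ge (L+BL+\delta n)/(3L)$ to absorb $\delta + O(L(1+B)/n)$ into a further $L\varepsilon(k/n)/n$, the per-step drift becomes $\E[\Delta Z_{j^*}(k) \mid \mc{H}_k] \le n[y_{j^*}((k+1)/n) - y_{j^*}(k/n)] + 2L\varepsilon(k/n)/n$. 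Convexity of $\varepsilon$ upgrades $2L\varepsilon(k/n)/n$ to $\varepsilon((k+1)/n) - \varepsilon(k/n)$, so summing telescopes; adding $Z_{j^*}(k^*+1) \le n y_{j^*}((k^*+1)/n) + \lambda$, the martingale slack $|N_{j^*}(i^*) - N_{j^*}(k^*+1)| \le 2\lambda$ available on $\mc{E}$, and finally using $\varepsilon((k^*+1)/n) \ge 3\lambda$ to cancel the accumulated $\lambda + 2\lambda$, yields $Z_{j^*}(i^*) \le n y_{j^*}(i^*/n) + \varepsilon(i^*/n)$, contradicting the choice of $i^*$.

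The main obstacle is the one-sided nature of the Trend Hypothesis, which gives no control over how far below $n y_{j^*}$ the process $Z_{j^*}$ may drift; a naive summation starting at $k = 0$ would pick up an uncontrolled Lipschitz contribution $L|Z_{j^*}(k)/n - y_{j^*}(k/n)|$ whenever $Z_{j^*}$ dove far beneath the solution trajectory. The key trick is to anchor the summation at the last crossing time $k^*$, confining attention to an interval on which $D_{j^*}$ is non-negative and bounded by $\varepsilon$; this is exactly the regime in which cooperativity plus sup-norm Lipschitzness deliver the clean $L\varepsilon/n$ drift bound, and it is what allows the exponential envelope $3\lambda e^{2Lt}$ (rather than a faster-growing one) to close the argument.
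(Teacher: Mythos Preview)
Your proposal is correct and follows essentially the same approach as the paper: Doob decomposition plus Freedman's inequality for the probabilistic part, then a deterministic critical-interval/crossing-time argument exploiting cooperativity and Lipschitzness on the interval where $Z_{j^*}$ sits between $ny_{j^*}$ and $ny_{j^*}+\varepsilon$. The only cosmetic differences are that the paper packages the drift estimate as $\E[\Delta S_j \mid \mc{H}_i] \le 0$ for $S_j = Z_j - ny_j - g$ (and then compares the predictable parts $X_j(i'')$, $X_j(i')$), whereas you sum the bound on $\E[\Delta Z_{j^*}\mid\mc{H}_k]$ directly and telescope; your explicit handling of the case $k^*=-1$ via the Initial Condition is actually slightly cleaner than the paper's, which tacitly assumes its anchor index $i''\ge 1$.
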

\begin{Remark}[Simplified Parameters]
By taking $b = \beta^2$ and $I = \lceil \sigma n \rceil$, we can recover a simpler version of the theorem which is sufficient for many applications, including the motivating example of \Cref{sec:toy_example}.
\end{Remark}

\begin{Remark}[Stopping Time Selection]
Let $0 \le \gamma \le 1$ be an additional parameter to \Cref{thm:de_one_sided}.
The stopping time $I$ is most commonly applied in the following way. Suppose
that $(\scr{E}_i)_{i=0}^{\infty}$ is a sequence of events adapted
to $(\scr{H}_i)_{i=0}^{\infty}$, and for each $0 \le i < \sigma n$,
Conditions \ref{item:boundedness}.~and \ref{item:expected_change}.~are \textit{only} verified when $\scr{E}_i$ holds. Moreover, assume that $\mb{P}[\cap_{i=0}^{\sigma n -1} \scr{E}_i] = 1 -\gamma$. By defining $I$ to be the smallest $i \ge 0$ such that $\scr{E}_i$ does \textit{not} occur, \Cref{thm:de_one_sided} implies
that with probability at least $1 - 2 a \exp\left( -\frac{\lambda^2}{2( b \sigma n + 2\beta \lambda )} \right) - \gamma$,
 \begin{equation*}\label{eq:alt_prob}
 Z_j(i) \le ny_{j}(i/n) + 3 \lambda e^{2L i/n} 
\end{equation*}
for all $1 \le j \le a$ and $0 \le i \le \sigma n$.

\end{Remark}

\begin{Remark}[Competitive Functions]
Theorem \ref{thm:de_one_sided} yields upper bounds for families of random variables. There is a symmetric theorem for lower bounds, where all the appropriate inequalities are reversed and the functions $f_j$ are competitive instead of cooperative. 

\end{Remark}
We conclude the section with a corollary of \Cref{thm:de_one_sided} which provides
a useful extension of the theorem. Roughly speaking, the extension says that when
verifying Conditions \ref{item:boundedness}. and \ref{item:expected_change}. at time $0 \le i \le \min\{\sigma n, I\}$, it does
not hurt to assume that \eqref{eq:mainprob} holds.
\begin{corollary}[of \Cref{thm:de_one_sided}] \label{cor:assuming_conditions}
Suppose that in the terminology of \Cref{thm:de_one_sided}, Conditions \ref{item:boundedness}. and \ref{item:expected_change}.
are only verified for each $0 \le i \le \min\{I, \sigma n\}$ which satisfies $Z_{j'}(i) \le ny_{j'}(i/n) + 3 \lambda e^{2Li/n}$ for all $1 \le j'\le a$. In this case, the conclusion of \Cref{thm:de_one_sided} still holds.
I.e., with probability at least $1 - 2 a \exp\left( -\frac{\lambda^2}{2( b \sigma n + 2\beta \lambda )} \right)$,
 \begin{equation*}
 Z_j(i) \le ny_{j}(i/n) + 3 \lambda e^{2L i/n} 
    \end{equation*}
for all $1 \le j \le a$ and $0 \le i \le \min\{I, \sigma n\}$.

\end{corollary}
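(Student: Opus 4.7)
The plan is to reduce \Cref{cor:assuming_conditions} to \Cref{thm:de_one_sided} by replacing $I$ with a modified stopping time that also halts as soon as the bound \eqref{eq:mainprob} is about to be violated. Concretely, I would define
\[
\tau := \inf\{i \ge 0 : Z_{j'}(i) > n y_{j'}(i/n) + 3\lambda e^{2Li/n} \text{ for some } 1 \le j' \le a\},
\]
and set $I' := \min\{I, \tau\}$. Since $\tau$ is determined by $Z_1(0),\ldots,Z_a(i)$, all of which are $\scr{H}_i$-measurable, the event $\{\tau \le i\}$ is $\scr{H}_i$-measurable; combined with the fact that $I$ is already a stopping time, this shows $I'$ is also a stopping time adapted to $(\scr{H}_i)_{i=0}^\infty$.

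Next I would verify the hypotheses of \Cref{thm:de_one_sided} with $I'$ in place of $I$. For any $0 \le i < \min\{I', \sigma n\}$ we have $i < \tau$, so by construction $Z_{j'}(i) \le n y_{j'}(i/n) + 3\lambda e^{2Li/n}$ for every $j'$; since also $i < I$ and $i < \sigma n$, the hypothesis of the corollary guarantees that the Boundedness and Trend Hypotheses of \Cref{thm:de_one_sided} both hold at $i$. The Initial Condition is inherited unchanged. Applying \Cref{thm:de_one_sided} to $I'$ then yields that, with probability at least $1 - 2a\exp(-\lambda^2/(2(b\sigma n + 2\beta\lambda)))$, the bound \eqref{eq:mainprob} holds for every $1 \le j \le a$ and every $0 \le i \le \min\{I', \sigma n\}$.

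Finally I would argue that on this high-probability event $\min\{I', \sigma n\} = \min\{I, \sigma n\}$, so the conclusion transfers directly to the original stopping time $I$. Since $I' \le I$ always, the two can differ only if $I' < I$, which forces $I' = \tau$. If in addition $\tau \le \sigma n$, then $\tau = \min\{I', \sigma n\}$, so the bound obtained above would hold at $i = \tau$, contradicting the very definition of $\tau$; otherwise $\tau > \sigma n$ and hence $I > I' > \sigma n$, which gives $\min\{I, \sigma n\} = \sigma n = \min\{I', \sigma n\}$, again a contradiction.

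The main conceptual step is the choice of $I'$; once this is in place the argument is essentially bookkeeping, and I do not anticipate any serious obstacle since \Cref{thm:de_one_sided} is invoked purely as a black box.
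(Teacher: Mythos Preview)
Your proposal is correct and follows essentially the same approach as the paper: define the auxiliary stopping time $\tau$ (the paper calls it $I^*$) as the first violation of \eqref{eq:mainprob}, apply \Cref{thm:de_one_sided} with $\min\{I,\tau\}$ in place of $I$, and observe that on the high-probability event the bound at $i=\tau$ would contradict the definition of $\tau$, forcing $\tau>\min\{I,\sigma n\}$. The only difference is cosmetic: the paper compresses your final case analysis into the single observation that the event returned by the theorem holds if and only if $I^*>\min\{I,\sigma n\}$.
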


\begin{proof}[Proof of \Cref{cor:assuming_conditions}]
Let $I^*$ be the first $i \ge 0$ such that
     \begin{equation*}
 Z_{j'}(i) > ny_{j'}(i/n) + 3 \lambda e^{2L i/n} 
    \end{equation*}
for some $1 \le j' \le a$. Clearly, $I^*$ is a stopping time adapted
to $(\scr{H}_i)_{i=0}^{\infty}$. Moreover, by the assumptions of the corollary,
Conditions \ref{item:boundedness}. and \ref{item:expected_change}. hold
for each $0 \le i\le \min\{I^*, I, \sigma n\}$ and $1 \le j \le a$.
Thus, \Cref{thm:de_one_sided} implies
that with probability at least $1 - 2 a \exp\left( -\frac{\lambda^2}{2( b \sigma n + 2\beta \lambda )} \right)$,
 \begin{equation*}
 Z_j(i) \le ny_{j}(i/n) + 3 \lambda e^{2L i/n} 
    \end{equation*}
for all $1 \le j \le a$ and $0 \le i \le \min\{I,I^*, \sigma n\}$. Since the preceding event
holds if and only if $I^* > \min\{I, \sigma n\}$, the corollary is proven.
    
\end{proof}

\section{Proving \Cref{thm:de_one_sided}} \label{sec:proving_one_sided}

Before proceeding to the proof of \Cref{thm:de_one_sided}, we provide some intuition for our approach
by presenting a proof of the deterministic setting (i.e., \Cref{thm:comparisonmulti}). The notation
and structure of the proof is intentionally chosen so as to relate to the analogous approach taken in the proof of \Cref{thm:de_one_sided}.
Moreover, the main claim we prove can be viewed as an \textit{approximate} version of \Cref{thm:comparisonmulti}, in which the upper bounds on $z_{j}(0)$ and $z'_{j}$ only hold up to an additive constant $\delta > 0$.

\begin{proof}[Proof of \Cref{thm:comparisonmulti}]
Let us assume that $c=0$ is the boundary of the domain,
and $L$ is a Lipschitz constant for the cooperative functions $(f_j)_{j=1}^{a}$.
We shall prove the following:
Given an arbitrary $\delta >0$, if
\begin{equation} \label{eqn:approximate_upper_bounds_assumption}
    z_j'(t) \le f(t, z_j(t))+\delta, \qquad  z_j(0) \le y_j(0) +\delta
\end{equation}
for all $1 \le j \le a$ and $t \ge0$, then
\begin{equation} \label{eqn:approximate_upper_bounds}
    z_{j}(t) \le y_{j}(t) + \delta e^{L t}
\end{equation}
for each $1 \le j \le a$ and $t \ge 0$. 
Since \eqref{eqn:approximate_upper_bounds_assumption} holds \textit{each} $\delta > 0$ under the assumptions
of \Cref{thm:comparisonmulti}, so must \eqref{eqn:approximate_upper_bounds}. This will imply that $z_{j}(t) \le y_{j}(t)$ for each $1 \le j \le a$ and $t \ge 0$, thus completing the proof.

In order to prove that \eqref{eqn:approximate_upper_bounds_assumption} implies \eqref{eqn:approximate_upper_bounds},
define
\[
g(t):= 2\delta e^{Lt}, \qquad s_j(t):=z_j(t) - (y_j(t) +g(t)), \qquad I_j(t):=[y_j(t), y_j(t) + g(t)).
\]
It suffices to show that $\max_{1 \le j \le a} s_j(t) \le 0$ for all $t \ge 0$.
Observe first that $s_j(0)=z_{j}(0)-y_{j}(0)-g(0) \le -\delta < 0$ for all $1 \le j \le a$. Suppose for the sake of contradiction that
there exists some $1 \le j' \le a$ such that $s_{j'}(t) > 0$ for some $t > 0$. In this case, there must be some value $t_1$ with $s_{j'}(t_1)=0$ and $\max_{1 \le j \le a} s_{j}(t)<0$ for all $t < t_1$. Furthermore, there must be some $t_0<t_1$ such that $s_{j'}(t) \in [-g(t), 0)$ for all $t_0 \le t < t_1$. 
Thus, for $t_0 \le t < t_1$ we have that
\begin{equation}\label{eq:zjprimeest}
    -g(t) \le z_{j'}(t) - [y_{j'}(t) +g(t)] < 0  \qquad \Longrightarrow \qquad y_{j'}(t) \le z_{j'}(t) < y_{j'}(t) + g(t),
\end{equation}
and so
\begin{align}
    f_{j'}\Big(t, z_1(t), \ldots z_a(t)\Big) & \le f_{j'}\Big(t, y_1(t)+g(t), \ldots, z_{j'}(t), \ldots,  y_a(t)+g(t)\Big)\nn\\
    & \le f_{j'}\Big(t, y_1(t), \ldots,  y_a(t)\Big) + Lg(t)\label{eq:fjprimebound}
\end{align}
where the first line is by cooperativity of the functions $f_j$ and the second line is by the Lipschitzness of $f_{j'}$ applied to \eqref{eq:zjprimeest}. As such, for all $t \in [t_0, t_1)$,
\begin{align*}
    s_{j'}'(t)  &= z_{j'}'(t) - y_{j'}'(t) - g'(t)\\
    & = f_{j'}\Big(t, z_1(t), \ldots, z_a(t)\Big) - f_{j'}\Big(t, y_1(t), \ldots, y_a(t)\Big) -g'(t)\\
    &\le Lg(t) - g'(t) =0
\end{align*}
where the last line uses \eqref{eq:fjprimebound}. But now we have a contradiction: $s_{j'}(t_0) \in [-g(t_0), 0)$ so it is negative, $s_{j'}'(t) \le 0$ on $[t_0, t_1)$, and yet $s_{j'}(t_1)=0$. 

\end{proof}

Our proof of Theorem \ref{thm:de_one_sided} is based partly on the {\em critical interval method}. Similar ideas were used by used by Telcs, Wormald and Zhou~\cite{TAW2007} as well as Bohman, Frieze and Lubetzky~\cite{BFL2015} (whose terminology we use here). For a gentle discussion of the method see the paper of the first author and Dudek \cite{bennett_de}. Roughly speaking, the critical interval method allows us to assume we have good estimates of key variables during the very steps that we are most concerned with those variables. Historically this method has been used with more standard applications of the differential equation method in order to exploit {\em self-correcting} random variables, i.e. a variable with the property that when it strays significantly from its trajectory, its expected one-step change makes it likely to move back toward its trajectory. For such a random variable, knowing that it lies in an interval strictly above (or below) the trajectory gives us a more favorable estimate for its expected one-step change. In our setting we use the method for a similar but different reason. In particular since we can only hope for one-sided bounds, we may as well ignore our random variables when they are far away from their bounds (in any case, we do not have or need good estimates for their expected one-step changes etc. during the steps when all variables are far from their bounds).

We give an analogy. A rough proof sketch for Theorem \ref{thm:comparisonmulti} is as follows: in order to have $z_j(t) > y_j(t)$ for some $t$ there must be some time interval during which $z_j\approx y_j$ and during that interval $z_j$ must increase significantly faster than $y_j$, which contradicts what we know about their derivatives. An analogous proof sketch for Theorem \ref{thm:de_one_sided} is as follows: in order for $Z_j(i)$ to violate its upper bound, it must first enter a critical interval which we will define to be near the upper bound, and then $Z_j$ must increase significantly (more than we expect it to) over the subsequent steps, which while possible, is unlikely. Our probability bound will follow from Freedman's inequality, which we will now state.

\begin{theorem}[Freedman's Inequality \cite{freedman1975}] \label{thm:freedman}
Suppose that $(\scr{H}_i)_{i=0}^\infty$ is an increasing sequence of $\sigma$-algebras (i.e., $\scr{H}_{i-1}  \subseteq \scr{H}_{i}$ for all $i \ge 1$.)
Moreover, let $(M_i)_{i=0}^\infty$ be a sequence of random variables adapted to $(\scr{H}_i)_{i=0}^\infty$ (i.e., each $M_i$ is $\scr{H}_i$-measurable). Recall that $\Delta M_{i}:= M_{i+1} -M_i$ and $\Var(\Delta M_i \mid \scr{H}_i):= \mb{E}[ \Delta M_i^2 \mid \scr{H}_i] - \mb{E}[\Delta M_i \mid \scr{H}_i]^2$
for $i \ge 0$. Fix $m \in \mb{N}$ and $\beta,b  \ge 0$. Assume that for
each $0 \le i <m$, $\mb{E}[\Delta M_i \mid \scr{H}_i] =0$, $|\Delta M_{i}| \le \beta$, and $\Var( \Delta M_i \mid \scr{H}_i) \le b$.
Then, for any $0 < \eps < 1$,
$$
		\Pr(\exists \; 0 \le i \le m : |M_i - M_0| \geq \eps) \leq \displaystyle 2\exp\left(-\frac{\eps^2}{2(b m + \beta \eps) }\right).
$$
Moreover, if $I$ is an arbitrary stopping time adapted to $(\scr{H}_i)_{i=0}^{\infty}$ (i.e., $\{I =i\}$ is $\scr{H}_i$-measurable for each $i \ge 0$),
and the above conditions regarding $(M_i)_{i=0}^{\infty}$ are only verified for all $0 \le i < \min\{I, m\}$,
then
$$
\Pr(\exists \; 0 \le i \le \min\{I, m\} : |M_i - M_0| \geq \eps) \leq \displaystyle 2\exp\left(-\frac{\eps^2}{2(b m + \beta \eps) }\right).
$$
\end{theorem}

\begin{proof}[Proof of \Cref{thm:de_one_sided}]
Fix $0 \le i \le \sigma n$, and set $m:= \sigma n$, $t=t_i=i/n$, and $g(t):= 3\lambda e^{2Lt}$
for convenience. Define 
\begin{align*}
S_j(i):&=Z_j(i)-(ny_j(t) + g(t)), \quad X_j(i):=\sum_{k=0}^{i-1} \E[\D S_j(k) \mid \mc{H}_{k}], \\ M_j(i):&= S_j(0) + \sum_{k=0}^{i-1} \rbrac{\D S_j(k)- \E[\D S_j(k) \mid \mc{H}_{k}]},
\end{align*}
so that $S_j(i) = X_j(i) + M_j(i)$, $(M_j(i))_{i= 0}^{m}$ is a martingale and $X_j(i)$ is $\mc{H}_{i-1}$-measurable (i.e. $(X_j(i) + M_j(i))_{i= 0}^{m}$ is the Doob decomposition of $(S_j(i))_{i= 0}^{m}$). Note that we can view $S_{j}(i)/n$ as the random analogue of $s_{j}(t) =m_{j}(t) + x_{j}(t)$ from the proof of \Cref{thm:comparisonmulti}. In the previous deterministic setting, the decomposition $s_{j}(t) =m_{j}(t) + x_{j}(t)$ is redundant, as $m_{j}(t) = s_{j}(0)$, and so $x_{j}(t)$ and $s_{j}(t)$ differ by a constant. In contrast, $M_{j}(i)$ is $S_{j}(0)$, \textit{plus}
$\sum_{k=0}^{i-1} \rbrac{\D S_j(k)- \E[\D S_j(k) \mid \mc{H}_{k}]}$, the latter of which we can view
as introducing some \textit{random noise}.
We handle this random noise by showing that $M_{j}(i)$ is typically concentrated
about $S_{j}(0)$ due to Freedman's inequality (see \Cref{thm:freedman}). We refer to this as the \textit{probabilistic} part of the proof. Assuming that this concentration holds, we can upper bound $Z_{j}(i)$ by $ny_j(t) + g(t)$ via an argument which proceeds analogously to the proof of \Cref{thm:comparisonmulti}. This is
the \textit{deterministic} part of the proof.

Beginning with the probabilistic part of the proof,
we restrict our attention to $0 \le i < \min\{I, m\}$.
Observe first that
\begin{align}
    \D M_j(i) &= \D S_j(i)- \E[\D S_j(i) \mid \mc{H}_{i}]\nn\\
    & = \D [Z_j(i)-(ny_j(t) + g(t))]- \E[\D [Z_j(i)-(ny_j(t) + g(t))] \mid \mc{H}_{i}]\nn\\
    & = \D Z_j(i)- \E[\D Z_j(i) \mid \mc{H}_{i}]\nn,
\end{align}
and so by Condition \ref{item:boundedness}.~,
\[
|\D M_j(i)| \le |\D Z_j(i)| + |\E[\D Z_j(i) \mid \mc{H}_{i}]| \le 2 \beta.
\]
Also, $\Var[ \D M_{j}(i) \mid \scr{H}_i] = \mb{E}[ (\D Z_j(i)- \E[\D Z_j(i) \mid \mc{H}_{i}])^2 \mid \scr{H}_i]$. 
Thus,
\begin{align*}
    \Var[ \D M_{j}(i) \mid \scr{H}_i] &= \mb{E}[ \D Z_j(i)^2 \mid \scr{H}_i] - \E[\D Z_j(i) \mid \mc{H}_{i}]^2 \\ 
                                       &\le \mb{E}[ \D Z_j(i)^2 \mid \scr{H}_i]  \\ &\le b \tag*{by Condition \ref{item:boundedness}.~}
\end{align*}
We can therefore apply \Cref{thm:freedman} to get that
 \begin{equation}\label{eq:azuma}
\Pr(\exists \; 0 \le j \le a, 0 \le i \le \min\{I, m\} : |M_j(i) - M_j(0)| \ge \lambda) \le 2a \exp\left( -\frac{\lambda^2}{2( b m + 2\beta \lambda)} \right).
\end{equation}
Suppose the above event does \textit{not} happen i.e., for all $0 \le j \le a,0 \le i \le \min\{m,I\}$ we have that $|M_j(i) - M_j(0)| < \lambda$. We will show that we also have $Z_j(i) \le ny_j(t)+g(t)$ for all $0 \le i \le \min\{m,I\}$ and $1 \le j \le a$ (equivalently, $\max_{j} S_{j}(i) \le 0$ for all $0 \le i \le \min\{m,I\}$). This implication is the deterministic part of the proof.
By combining it with the probability bound of \eqref{eq:azuma}, this will complete the proof of \Cref{thm:de_one_sided}.

Suppose for the sake of contradiction that $i'$ is the minimal integer
such that $0 \le i' \le \min\{m,I\}$ and $Z_j(i') > ny_j(t_{i'})+g(t_{i'})$ for some $j$. Define the \tbf{critical interval}
\[
I_j(i) := \sbrac{ny_j(t), ny_j(t)+g(t)}.
\]
First observe that since $g(0):=3 \lambda > \lambda$, Condition \ref{item:initial}. implies
that $i' >0$ (and so $i'-1 \ge 0$.) We claim that $Z_j(i'-1) \in I_j(i'-1)$. Indeed, note that by the minimality of $i'$ we have that $Z_j(i'-1) \le ny_j(t_{i'-1})+g(t_{i'-1})$. On the other hand, 
$|y_j'| = |f_j| \le B$ and so each $y_j$ is $B$-Lipschitz. Thus,
since $\lambda \ge \beta + B$ (by assumption),
\begin{align}
    Z_j(i'-1) \ge Z_j(i') - \beta &> ny_j(t_{i'})+g(t_{i'}) - \beta\nn\\
    & \ge ny_j(t_{i'-1}) +3\lambda - \beta - B \nn\\
    & \ge ny_j(t_{i'-1})\nn.
\end{align}
As a result, $Z_{j}(i'-1) \in I_{j}(i'-1)$. Now let $i'' \le i'-1$ be the minimal integer with the property that for all $i'' \le i \le i'-1$, we have that $Z_j(i) \in I_j(i)$. Then $Z_j(i''-1) \notin I_j(i''-1)$ and by the minimality of $i'$ we must have that $Z_j(i''-1) < ny_j(t_{i''-1})$. By once again using the fact that $y_j$ is $B$-Lipschitz,
\begin{align}
    Z_j(i'') \le Z_j(i''-1) + \beta & < ny_j(t_{i''-1}) + \beta \le ny_j(t_{i''}) + \beta+B\label{eq:yi''bound}.
\end{align}
Now, since $Z_j(i') > ny_j(t_{i'})+g(t_{i'})$, we can apply \eqref{eq:yi''bound} to get that 
\begin{align}\label{eq:sdiff}
    S_j(i') - S_j(i'')  &= (Z_j(i') - ny_j(t_{i'}) -g(t_{i'})) - (Z_j(i'') - ny_j(t_{i''}) - g(t_{i''}))\nn \\
    &>g(t_{i''})-\beta -B \nn\\
    &\ge 3 \lambda -\beta-B.
\end{align}
Intuitively, \eqref{eq:sdiff} says that $S_{j}(i)$ increases significantly between steps $i''$
and $i'$.
We will now argue that $\mb{E}[ \Delta S_{j}(i) \mid \mc{H}_i]$ 
is nonpositive between steps $i''$ and $i'$. Informally, by scaling by $n$ and interpreting $\mb{E}[ \Delta S_{j}(i) \mid \mc{H}_i]$ 
as the ``derivative'' of $S_{j}(i)/n$ evaluated at $i/n$, this will allow us to derive a contradiction
in an analogous way as in the final part of the proof of \Cref{thm:comparisonmulti}.

Observe first that for each $i'' \le i \le i'-1$, we have that
\begin{align}
    \E[\Delta S_{j}(i) \mid \mc{H}_i] & = \E[ \D Z_{j}(i) \mid \mc{H}_i] - \D ny_j(t) - \D g(t)\nn\\
    & \le f_j(t, Z_1(i)/n, \ldots Z_a(i)/n) + \delta\nn \\
    & \qquad \qquad - f_j(t, y_1(t), \ldots, y_a(t)) + (L+BL)n^{-1} -  n^{-1} g'(t) \label{eq:mtg}
\end{align}
where the first line is by definition and line \eqref{eq:mtg} will now be justified.
The first two terms follow since by Condition \ref{item:expected_change}, $(t, Z_1(i)/n, \ldots Z_a(i)/n) \in \scr{D}$,
and
$$\E[ \D Z_j(i) \mid \mc{H}_i] \le f_j(t, Z_1(i)/n, \ldots Z_a(i)/n) + \delta.$$
For the third and fourth terms of \eqref{eq:mtg}, note that 
\begin{align}
    \D ny_j(t) = n[y_j(t_{i+1})-y_j(t_i)] &= n \int_{t_i}^{t_{i+1}} y_j'(\t) \; d\t \nn\\
    & = n \int_{t_i}^{t_{i+1}} f_j(\t, y_1(\t), \ldots, y_a(\t))  \; d\t\nn\\
    & \ge n \int_{t_i}^{t_{i+1}} f_j(t, y_1(t_i), \ldots, y_a(t_i)) -(L+BL)n^{-1}  \; d\t\label{eqn:lipschitz1}\\
    & = f_j(t, y_1(t_i), \ldots, y_a(t_i))- (L+BL)n^{-1},\nn
\end{align}
where \eqref{eqn:lipschitz1} follows since for all $\t \in [t_i, t_{i+1}]$ we have $|y_j(\t) - y_j(t_i)| \le B|\t-t_i| \le Bn^{-1}$ (since the $y_j$ are $B$-Lipschitz), and now since the $f_j$ are $L$-Lipschitz, we have
\[
|f_j(\t, y_1(\t), \ldots, y_a(\t)) - f_j(t, y_1(t_i), \ldots, y_a(t_i))| \le L \cdot \max\{|\t-t_i|, Bn^{-1}\} \le L(1+B)n^{-1}.
\]
For the last term of \eqref{eq:mtg}, we have that
\begin{align*}
    \D g(t) = 3 \lambda \rbrac{e^{2L t_{i+1}} - e^{2L t_{i}}} &= 3 \lambda e^{2L t_{i}} \rbrac{e^{2L /n} - 1} \\
    & \ge 3\lambda e^{2L t_{i}} \rbrac{\frac{2L}{n}} = n^{-1}g'(t).
\end{align*}
Observe now that by cooperativity, $f_j(t, Z_1(i)/n, \ldots Z_a(i)/n)$ is upper bounded by
\begin{equation}\label{eq:monotonicity}
 f_j\rbrac{t, \frac{ny_1(t)+g(t)}{n}, \ldots, \frac{ny_{j-1}(t)+g(t)}{n}, \frac{Z_j(i)}{n}, \frac{ny_{j+1}(t)+g(t)}{n}, \ldots, \frac{ny_a(t)+g(t)}{n}}.
\end{equation} 
Now, since $Z_j(i) \in I_j(i)$, we can apply the Lipschitzness of $f_j$
to \eqref{eq:monotonicity} to get that
\begin{equation}\label{eqn:lipschitz2}
  f_j(t, Z_1(i)/n, \ldots Z_a(i)/n) \le f_j(t, y_1(t), \ldots, y_a(t)) + L g(t)/n.  
\end{equation}
As such, applied to \eqref{eq:mtg}, 
\begin{align}
 \E[\Delta S_j(i) \mid \mc{H}_i] & \le f_j(t, Z_1(i)/n, \ldots Z_a(i)/n) + \delta - f_j(t, y_1(t), \ldots, y_a(t)) + (L+BL)n^{-1} -  n^{-1} g'(t)\nn\\
 &  \le L n^{-1}g(t) + \delta  - n^{-1} g'(t) +(L+BL)n^{-1}\nn\\
    & = L n^{-1}g(t) + \delta  - n^{-1} 2L g(t) +(L+BL)n^{-1}\nn\\
      & \le  -[L g(t)  - (L+BL + \delta n)]n^{-1}\nn\\
    & \le  -[3L \lambda  - (L+BL + \delta n)]n^{-1}\label{eq:14}\\
    & \le 0\label{eq:15},
\end{align}
where the final line follows since $\lambda \ge \frac{L+BL + \delta n}{3L}$.

Therefore, for $i'' \le i \le i'-1$ we have that
\[
0 \ge \E[\Delta S_j(i) \mid \mc{H}_i] = \E[\Delta X_j(i) \mid \mc{H}_i] + \E[\Delta M_j(i) \mid \mc{H}_i] = \Delta X_j(i)
\]
since $(M_j(i))_{i= 0}^{m}$ is a martingale and $\D X_j(i)$ is $\mc{H}_{i}$-measurable. In particular, 
\begin{equation}\label{eq:Xdiff1}
    X_j(i') \le X_j(i'').
\end{equation}
At this point, we use the event we assumed regarding $(M_{j}(i))_{i=0}^{m}$
 (directly following \eqref{eq:azuma}) to get that
\begin{equation}\label{eq:Mdiff1}
M_j(i') - M_j(i'') \le |M_j(i') - M_j(0)| + |M_j(i'') - M_j(0)| \le 2\lambda.
\end{equation}
But now we can derive our final contradiction (explanation follows):
\begin{align*}
3 \lambda -\beta-B & < S_j(i') - S_j(i'') \\
&= X_j(i') - X_j(i'') + M_j(i') - M_j(i'') \\
& \le 2 \lambda.  
\end{align*}
Indeed the first line is from \eqref{eq:sdiff}, the second is by the Doob decomposition, and the last line follows from \eqref{eq:Xdiff1} and \eqref{eq:Mdiff1}. But the last line is a contradiction since we chose $\lambda \ge  \beta+B $.

 \end{proof}

\begin{Remark}

We can relax several of the Conditions \ref{item:fjLipshitz})-\ref{item:cooperative}) on the $f_j$ and Conditions \ref{item:boundedness}. and \ref{item:expected_change}. of Theorem \ref{thm:comparisonmulti} and the conclusion still holds (in particular, the proof we gave still goes through). We will list some possible relaxations below. 

\begin{itemize}
    \item Condition \ref{item:fjLipshitz}): We only use this condition on \eqref{eqn:lipschitz1} and \eqref{eqn:lipschitz2}, and in both situations we are applying it at points $(t, z_1, \ldots, z_a)$ which are very close to the trajectory. In particular, instead of assuming that $f_j$ is $L$-Lipschitz on all of $\mc{D}$,  it suffices to have that $f_j$ is $L$-Lipschitz on the set of points
    \[
   \mc{D}^*:= \Big\{ (t, z_1, \ldots, z_a) \in \mathbb{R}^{a+1}: 0 \le t \le \sigma, y_{j'}(t) \le z_{j'} \le y_{j'}(t) + g(t) \mbox{ for } 1 \le j' \le a \Big\} \subseteq \mc{D}.
    \]

    \item Condition \ref{item:fjbounded}): We only use this condition to conclude that the system \eqref{eq:ysystem} has a unique solution and that those solutions $y_j$ are $B$-Lipschitz. So, instead of Condition \ref{item:fjbounded}) it suffices to just check directly that \eqref{eq:ysystem} has a unique solution that is $B$-Lipschitz.
    \item Condition \ref{item:cooperative}): We only use the cooperativity of the functions to get line \eqref{eq:monotonicity}. In this situation, for our point $(t, z_1, \ldots, z_a)$ some $z_j$ is in its critical interval (and all the $z_i$ are below their upper bounds). In particular, instead of assuming the full Condition \ref{item:cooperative}), for \eqref{eq:monotonicity} it suffices to have that $f_j(t, z_1, \ldots z_a)$ is upper bounded by
\begin{equation*}
 f_j\rbrac{t, \frac{ny_1(t)+g(t)}{n}, \ldots, \frac{ny_{j-1}(t)+g(t)}{n}, z_j, \frac{ny_{j+1}(t)+g(t)}{n}, \ldots, \frac{ny_a(t)+g(t)}{n}}
\end{equation*} 
whenever $z_{j'} \le y_{j'}(t) + g(t)/n$ for all $j'$ and $z_{j} \ge y_j(t)$.

    \item Condition \ref{item:expected_change}.~: We only use this condition to get \eqref{eq:mtg}. In this situation we have that one of the $Z_j$ is in its critical interval (and all the $Z_i$ are below their upper bounds). Instead of  Condition \ref{item:expected_change}.~it suffices to have the following: for each $1 \le j \le a$. If $Z_{j'}(i) \le ny_{j'}(i/n) + g(i/n)$ for all $1 \le j'\le a$ and $Z_j \ge ny_j(i/n)$, then
    $(i/n, Z_1(i)/n, \ldots Z_a(i)/n) \in \scr{D}$ and
    $$\E[ \D Z_j(i) \mid \mc{H}_i] \le  f_j(i/n, Z_1(i)/n, \ldots Z_a(i)/n)  + \delta .$$
\end{itemize}
\end{Remark}


\section{Recovering a Version of Wormald's Theorem}

In this section we recover the standard (two-sided) differential equation method of Wormald \cite{wormald1995}. The statement resembles the recent version given by Warnke \cite{warnke2019wormalds} in the sense that it does not use any asymptotic notation and instead gives explicit bounds for error estimates and failure probabilities. Like Warnke's proof, ours has a {\em probabilistic part} and a {\em deterministic part}. Our probabilistic part is much the same as Warnke's in that we apply a deviation inequality (though we use Freedman's theorem rather than the Azuma-Hoeffding inequality) to the martingale part of a Doob decomposition. That being said, the deterministic part of our argument is quite different than the deterministic part of Warnke's argument. In fact, we were not able to adapt Warnke's argument to the one-sided setting.

Given $a \in \mb{N}$, suppose that $\scr{D} \subseteq \mb{R}^{a+1}$ is a bounded domain,
and for $1 \le j \le a$, let $f_j: \scr{D} \rightarrow \mb{R}$. We assume that the following hold for each $j$:
\begin{enumerate}
    \item $f_j$ is $L$-Lipschitz, and 
    \item $|f_j|\le B$ on $\scr{D}$.
\end{enumerate}
Given $(0,\yt_1, \ldots, \yt_a) \in \scr{D}$, assume that $y_1(t), \ldots, y_a(t)$ is the (unique) solution to the system
\begin{equation*}
    y_j'(t) = f_j(t, y_1(t), \ldots, y_a(t)), \qquad y_j(0)=\yt_j.
\end{equation*}
for all $t \in [0, \sigma]$ where $\sigma$ is a positive value. Note that unlike in \Cref{thm:de_one_sided}, we make a further assumption involving $\sigma$ below in \Cref{thm:de_two_sided}.

\begin{theorem} \label{thm:de_two_sided}
Suppose for each $1 \le j \le a$ we have a sequence of random variables $(Y_j(i))_{i= 0}^\infty$ which
is adapted to the filtration  $(\mc{H}_i)_{i=0}^\infty$. Let $n \in \mb{N}$, and $\beta, b, \lambda, \delta > 0$
be any parameters such that $\lambda \ge \frac{L+BL + \delta n}{3L}$.
Moreover, assume that $\sigma >0$ is any value such that $(t,y_1(t), \ldots, y_a(t))$ has $\ell^{\infty}$-distance at least $3 \lambda e^{2Lt}/n$ from the boundary of $\scr{D}$ for all $t \in [0, \sigma)$.
Given an arbitrary stopping time $I \ge 0$ adapted to $(\mc{H}_i)_{i=0}^\infty$, suppose that the following properties hold for each $ 0 \le i  <  \min\{I, \sigma n \}$:
\begin{enumerate}
    \item The `Boundedness Hypothesis':  $\max_{j} |\Delta Y_j(i)| \le \beta$, and $\max_{j} \mb{E}[ ( \Delta Y_{j}(i) )^2 \mid \scr{H}_i] \le b$. \label{item:boundedness2}
    \item The `Trend Hypothesis': If 
    $(i/n, Y_1(i)/n, \ldots Y_a(i)/n) \in \scr{D}$, then
    $$\Big|\E[ \D Y_j(i) \mid \mc{H}_i] - f_j(i/n, Y_1(i)/n, \ldots Y_a(i)/n)\Big|  \le \delta $$
    
    for each $1 \le j \le a$. \label{item:expected_change2}
    \item  The `Initial Condition':  $|Y_j(0) - y_j(0) n| \le \lambda$ for all $1 \le j \le a$. \label{item:initial2}
\end{enumerate}
Then, with probability at least $1 - 2 a \exp\left( -\frac{\lambda^2}{2( b \sigma n + 2\beta \lambda )} \right)$,
 \begin{equation*}\label{eq:mainprob2}
 |Y_j(i) - ny_{j}(i/n)| \le 3 \lambda e^{2L i/n} 
    \end{equation*}
for all $1 \le j \le a$ and $0 \le i \le \min\{I, \sigma n\}$.

\end{theorem}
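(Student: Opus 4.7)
The plan is to run a two-sided adaptation of the critical-interval argument from the proof of \Cref{thm:de_one_sided}, with cooperativity replaced by a stopping-time trick. Set $g(t) := 3\lambda e^{2Lt}$ and $t_i := i/n$. The key observation is that cooperativity was used in the one-sided proof only to compare $f_j$ at the perturbed point $(t, Z_1/n, \ldots, Z_a/n)$ to $f_j$ on the trajectory. In the two-sided setting, I would introduce a stopping time $I^* := \min\{i \ge 0 : |Y_{j'}(i) - ny_{j'}(t_i)| > g(t_i) \text{ for some } j'\}$. Before $I^*$, every coordinate $Y_{j'}(i)/n$ lies within $g(t_i)/n$ of $y_{j'}(t_i)$, so the boundary assumption on $\sigma$ keeps the full tuple inside $\scr{D}$ (making the trend hypothesis applicable), and the Lipschitz property of $f_j$ alone delivers the comparison $|f_j(t, Y_1(i)/n, \ldots, Y_a(i)/n) - f_j(t, y_1(t), \ldots, y_a(t))| \le Lg(t)/n$. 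The goal then reduces to showing $I^* > \min\{I, \sigma n\}$ with the stated probability.

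The probabilistic part would be essentially unchanged from the one-sided proof: for each $j$, I would Doob decompose $Y_j(i) - ny_j(t_i) = (Y_j(0) - ny_j(0)) + X_j(i) + M_j(i)$ with $M_j$ a martingale, note that the boundedness hypothesis gives $|\Delta M_j(i)| \le 2\beta$ and $\Var[\Delta M_j(i) \mid \mc{H}_i] \le b$, and apply Freedman's inequality with a union bound over $j$ to bound the failure probability of
\[
\mathcal{A} := \{|M_j(i) - M_j(0)| < \lambda \text{ for all } j \text{ and } 0 \le i \le \min\{I, \sigma n\}\}
\]
by $2a \exp(-\lambda^2/(2(b\sigma n + 2\beta \lambda)))$. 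I would then condition on $\mathcal{A}$ for the deterministic part.

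Conditional on $\mathcal{A}$, suppose for contradiction that $I^* \le \min\{I, \sigma n\}$. By symmetry (the lower case reduces to the upper one by applying the argument to $-Y_j$ with $-f_j$, which swaps the two sides of the trend hypothesis), assume the violation at $i' := I^*$ is an upper one: $Y_j(i') > ny_j(t_{i'}) + g(t_{i'})$ for some $j$. Using in addition the constraint $\lambda \ge \beta + B$ (as in \Cref{thm:de_one_sided}), the critical-interval sub-argument transfers verbatim: it produces $i''$ such that $Y_j(i) \in [ny_j(t), ny_j(t) + g(t))$ throughout $[i'', i'-1]$ with $Y_j(i''-1) < ny_j(t_{i''-1})$, yielding $S_j^+(i') - S_j^+(i'') > 3\lambda - \beta - B$, where $S_j^+(i) := Y_j(i) - ny_j(t_i) - g(t_i)$. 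The key expected-change bound $\E[\Delta S_j^+(i) \mid \mc{H}_i] \le 0$ for $i'' \le i \le i'-1$ then follows by combining the upper side of the trend hypothesis, the Lipschitz comparison above (valid because $i < I^*$, and replacing the one-sided proof's use of cooperativity), and the same estimates on $\Delta ny_j(t_i)$ and $\Delta g(t_i)$ as in the one-sided proof. This gives $X_j(i') - X_j(i'') \le 0$, which combined with $|M_j(i') - M_j(i'')| \le 2\lambda$ (from $\mathcal{A}$) contradicts $3\lambda - \beta - B < S_j^+(i') - S_j^+(i'')$. The main obstacle --- resolved by the stopping-time observation --- is recognizing that $I^*$ provides exactly the bilateral control on all coordinates that cooperativity supplied in the one-sided setting, so that the plain Lipschitz property of $f_j$ suffices for the crucial comparison step.
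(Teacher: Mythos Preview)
Your overall structure is right, and in particular your key observation is exactly the one the paper uses: before the first violation time $I^*$ every coordinate is within $g(t)/n$ of the trajectory, so the plain Lipschitz bound replaces cooperativity. However, there is a genuine gap.

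You explicitly import the extra hypothesis $\lambda \ge \beta + B$ from \Cref{thm:de_one_sided}, but \Cref{thm:de_two_sided} does \emph{not} assume this; it only requires $\lambda \ge (L+BL+\delta n)/(3L)$. So as written you are proving a strictly weaker statement. Relatedly, your critical-interval step producing an intermediate $i''$ with $Y_j(i''-1) < ny_j(t_{i''-1})$ can fail outright: the two-sided initial condition allows $Y_j(0) \ge ny_j(0)$, in which case $Y_j(i)$ may lie in the upper half-interval $[ny_j(t), ny_j(t)+g(t))$ for every $0 \le i \le i'-1$, forcing $i''=0$ and leaving no step $i''-1$ to appeal to.

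The fix, which is what the paper does, is to drop the intermediate $i''$ entirely and compare $i'$ directly to time $0$. By minimality of $i'=I^*$, for \emph{every} $0 \le i < i'$ all coordinates satisfy $|Y_{j'}(i)-ny_{j'}(t_i)| \le g(t_i)$, so the Lipschitz comparison and hence $\E[\Delta S_j^+(i)\mid\mc{H}_i]\le 0$ hold on the whole range $[0,i')$, giving $X_j^+(i')\le X_j^+(0)$. Meanwhile the initial condition yields $S_j^+(0)=Y_j(0)-ny_j(0)-g(0)\le \lambda - 3\lambda = -2\lambda$, so $S_j^+(i')-S_j^+(0)>2\lambda$; combined with $|M_j(i')-M_j(0)|<\lambda$ this gives the contradiction $2\lambda < \lambda$. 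No appeal to $\lambda \ge \beta+B$ is needed anywhere.
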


We conclude the section with an extension of \Cref{thm:de_two_sided}
analogous to \Cref{cor:assuming_conditions} of \Cref{thm:de_one_sided}. We omit
the proof, as it follows almost identically to the proof of \Cref{cor:assuming_conditions}.
\begin{corollary}[of \Cref{thm:de_two_sided}] \label{cor:assuming_conditions2}
Suppose that in the terminology of \Cref{thm:de_two_sided}, Conditions \ref{item:boundedness2}. and \ref{item:expected_change2}.
are only verified for each $0 \le i \le \min\{I, \sigma n\}$ which satisfies $|Y_{j'}(i) - ny_{j'}(i/n)| \le 3 \lambda e^{2Li/n}$ for all $1 \le j'\le a$. In this case, the conclusion of \Cref{thm:de_two_sided} still holds.
I.e., with probability at least $1 - 2 a \exp\left( -\frac{\lambda^2}{2( b \sigma n + 2\beta \lambda )} \right)$,
 \begin{equation*}
 |Y_j(i) - ny_{j}(i/n)| \le 3 \lambda e^{2L i/n} 
    \end{equation*}
for all $1 \le j \le a$ and $0 \le i \le \min\{I, \sigma n\}$.
\end{corollary}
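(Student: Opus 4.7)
The plan is to mirror the proof of \Cref{cor:assuming_conditions} almost verbatim, with only cosmetic changes to account for the two-sided nature of the bound in \Cref{thm:de_two_sided}. The whole point of the corollary is that one can add a self-referential assumption ("the conclusion holds up to step $i$") when verifying the hypotheses, without changing the result. The standard trick for this is to absorb that self-referential condition into a stopping time.

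More precisely, define
\[
I^* := \min\Big\{ i \ge 0 \,:\, |Y_{j'}(i) - n y_{j'}(i/n)| > 3 \lambda e^{2L i/n} \text{ for some } 1 \le j' \le a \Big\},
\]
with $I^* = \infty$ if no such $i$ exists. Since the event $\{I^* = i\}$ depends only on $Y_{1}(0),\ldots,Y_{a}(i)$ (which are all $\mc{H}_i$-measurable), $I^*$ is a stopping time adapted to $(\mc{H}_i)_{i=0}^\infty$, and so is $I' := \min\{I, I^*\}$. By the very definition of $I^*$, for every $0 \le i < I^*$ one has $|Y_{j'}(i) - n y_{j'}(i/n)| \le 3 \lambda e^{2L i/n}$ for all $j'$, which is precisely the side condition under which the corollary's hypothesis guarantees Conditions \ref{item:boundedness2}.\ and \ref{item:expected_change2}.\ of \Cref{thm:de_two_sided}. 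Hence along the truncated process stopped at $I'$, the full Boundedness and Trend Hypotheses of \Cref{thm:de_two_sided} are satisfied for every $0 \le i < \min\{I', \sigma n\}$. The Initial Condition is unaffected by the truncation.

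With these hypotheses verified, apply \Cref{thm:de_two_sided} with stopping time $I'$ to conclude that with probability at least $1 - 2a \exp\!\left( -\frac{\lambda^2}{2(b\sigma n + 2\beta \lambda)} \right)$,
\[
|Y_j(i) - n y_j(i/n)| \le 3 \lambda e^{2L i/n} \qquad \text{for all } 1 \le j \le a,\ 0 \le i \le \min\{I', \sigma n\}.
\]
The final step is to observe that this event is equivalent to the event $\{I^* > \min\{I, \sigma n\}\}$: if the displayed inequality holds throughout $[0, \min\{I',\sigma n\}]$ then in particular $I^*$ cannot have occurred by step $\min\{I,\sigma n\}$, forcing $I' = \min\{I, \sigma n\}$ and giving the corollary's conclusion on the full range $[0, \min\{I, \sigma n\}]$; conversely if $I^* > \min\{I,\sigma n\}$ then again $I' = \min\{I, \sigma n\}$ and the displayed inequality is the corollary's conclusion verbatim.

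No step here is substantially harder than routine bookkeeping; the only thing that requires any care is confirming that $I^*$ is indeed a stopping time (which is immediate, since the thresholds $3 \lambda e^{2L i/n}$ and $n y_{j'}(i/n)$ are deterministic functions of $i$) and that the event produced by \Cref{thm:de_two_sided} truly coincides with "the conclusion of the corollary holds." There is no need to rerun the probabilistic/deterministic machinery of \Cref{thm:de_two_sided}; the stopping-time reduction is a black-box use of the theorem, exactly as in \Cref{cor:assuming_conditions}.
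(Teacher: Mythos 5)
Your proposal is correct and is essentially identical to the paper's argument: the paper proves \Cref{cor:assuming_conditions} by exactly this stopping-time truncation (defining $I^*$ as the first failure time, applying the theorem with $\min\{I,I^*\}$, and noting the resulting event coincides with $I^* > \min\{I,\sigma n\}$), and explicitly states that \Cref{cor:assuming_conditions2} follows by the same reduction applied to \Cref{thm:de_two_sided}. No issues.
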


\begin{proof}[Proof of \Cref{thm:de_two_sided}] 
Fix $0 \le i \le \sigma n$, and again set $m:= \sigma n$, $t=t_i=i/n$, and $g(t):= 3\lambda e^{2Lt}$
for convenience. Define
\begin{align*}
   S^{\pm}_j(i)&:=Y_j(i)-(ny_j(t) \pm g(t)),\\
   X^{\pm}_j(i)&:=\sum_{k=0}^{i-1} \E[\D S^{\pm}_j(k) \mid \mc{H}_{k}],\\
   M^{\pm}_j(i)&:= S^{\pm}_j(0) + \sum_{k=0}^{i-1} \rbrac{\D S^{\pm}_j(k)- \E[\D S^{\pm}_j(k) \mid \mc{H}_{k}]}
\end{align*}
so that  $(X^{\pm}_j(i) + M^{\pm}_j(i))_{i= 0}^{m}$ is the Doob decomposition of $(S^{\pm}_j(i))_{i= 0}^m$. Note that
\[
\D S^{\pm}_j(k)- \E[\D S^{\pm}_j(k) \mid \mc{H}_{k}] = \D Y_j(k)- \E[\D Y_j(k) \mid \mc{H}_{k}],
\]
and so $M^+_j(i)$ is almost the same as $M^-_j(i)$. More precisely, we have
 \[
 M^{\pm}_j(i) = M_j(i) \mp g(0)
 \]
 where
 \[
M_j(i):= Y_j(0)-ny_j(0) + \sum_{k=0}^{i-1} \rbrac{\D Y_j(k)- \E[\D Y_j(k) \mid \mc{H}_{k}]}
 \]
(which is also a martingale). As in the proof of Theorem \ref{thm:de_one_sided}, we have 
 $|\D M_j(i)| \le 2\beta$ and  $\Var[ \D M_{j}(i) \mid \scr{H}_i] \le b$. Now, by Theorem \ref{thm:freedman} we have that
 \begin{equation}\label{eq:azuma2}
\Pr\Big(\exists\; 0 \le j \le a, 0 \le i \le m \mbox{ such that } |M_j(i) - M_j(0)| \ge \lambda\Big) \le 2 a \exp\left( -\frac{\lambda^2}{2( b m + 2\beta \lambda)} \right).
\end{equation}
Suppose that the event above does not happen, so for all $0 \le j \le a,0 \le i \le m$ we have that $|M_j(i) - M_j(0)| < \lambda$. We will show that we also have $|Y_j(i) - ny_j(t)|\le g(t)$ for all $0 \le i \le m$. Note that $y_j$ is $B$-Lipschitz as before. Define the \tbf{critical interval}
\[
I_j(i) := \sbrac{ny_j(t)-g(t), ny_j(t)+g(t)}.
\]
Suppose for the sake of contradiction that $i'$ is minimal with $0 \le i' \le m$ and $Y_j(i') \notin I_j(i')$ for some $j$. We will consider the case where $Y_j(i') > ny_j(t)+g(t)$ (the case where $Y_j(i') < ny_j(t)-g(t)$ is handled similarly with some inequalities reversed). In other words, $S^+_j(i') > 0$. 
First observe that since $g(0):=3 \lambda$, Condition \ref{item:initial2}. implies $S^+_j(0) \le -2\lambda$. In particular, $i' >0$ and 
\begin{equation}\label{eq:Sdiff}
    S^+_j(i') - S^+_j(0)  >2\lambda.
\end{equation}

For $0\le i < i'$, we have (explanation follows)
\begin{align*}
    \E[\Delta S^+_{j}(i) \mid \mc{H}_i] & = \E[ \D Y_{j}(i) \mid \mc{H}_i] - \D ny_j(t) - \D g(t)\nn\\
    & \le f_j(t, Y_1(i)/n, \ldots Y_a(i)/n) + \delta - f_j(t, y_1(t), \ldots, y_a(t)) + (L+BL)n^{-1} -  n^{-1} g'(t)\nn\\
    & \le Ln^{-1}g(t) + \delta  +(L+BL)n^{-1} -  n^{-1} g'(t)\nn\\
    & \le  -[3L \lambda  - (L+BL + \delta n)]n^{-1}\\
    & \le 0.
\end{align*}
Indeed, the first line is by definition and the second line follows just like \eqref{eq:mtg},
with the minor caveat that we now must show that $(t, Y_1(i)/n, \ldots ,Y_a(i)/n)$
is within the domain $\scr{D}$ to apply Condition \ref{item:expected_change2} (recall that in \Cref{thm:de_one_sided} this is simply assumed). Observe that by the definition of $\sigma$, $(t, y_1(t), \ldots , y_{a}(t))$ is in $\scr{D}$ \textit{and} at least $\ell^{\infty}$-distance $g(t)/n$ from the boundary of $\scr{D}$. On the other hand, since $Y_{j'}(i) \in I_{j'}(i)$ for all $1 \le j' \le a$, we know
that $|Y_{j'}(i)/n - y_{j'}(t)| \le g(t)/n$ for all $1 \le j' \le a$. Thus, $(t, Y_1(i)/n, \ldots Y_a(i)/n) \in \scr{D}$, and so
$$\E[ \D Y_{j}(i) \mid \mc{H}_i] \le  f_j(t, Y_1(i)/n, \ldots ,Y_a(i)/n) + \delta$$ by Condition \ref{item:expected_change2}.
The remaining justification is much the same as before. Since $f_j$ is $L$-Lipschitz and $|Y_{j'}(i) - ny_{j'}(t)|\le g(t)$ for all $j'$, we have $f_j(t, Y_1(i)/n, \ldots Y_a(i)/n) \le f_j(t, y_1(t), \ldots, y_a(t))+Ln^{-1}g(t)$ and the third line follows. The fourth and fifth lines follow just as \eqref{eq:14} and \eqref{eq:15}. Therefore, for $0 \le i < i'$ we have that
\[
0 \ge \E[\Delta S^+_j(i) \mid \mc{H}_i] = \E[\Delta X^+_j(i) \mid \mc{H}_i] + \E[\Delta M^+_j(i) \mid \mc{H}_i] = \Delta X^+_j(i)
\]
since $(M^+_j(i))_{i= 0}^\infty$ is a martingale and $\D X^+_j(i)$ is $\mc{H}_{i}$-measurable. In particular 
\begin{equation}\label{eq:Xdiff}
    X^+_j(i') \le X^+_j(0).
\end{equation} 
But now we can derive our final contradiction (explanation follows):
\begin{align*}
  2 \lambda &< S^+_j(i') - S^+_j(0) \\
  & = X^+_j(i') - X^+_j(0) + M_j(i') - M_j(0)\\
  &\le \lambda. 
\end{align*}
Indeed, the first line is from \eqref{eq:Sdiff}, the second line is by the Doob decomposition, and the last follows from \eqref{eq:Xdiff} and our assumption that the event described on line \eqref{eq:azuma2} does not happen. Of course $2\lambda < \lambda$ is a contradiction and we are done. 
\end{proof}

\bibliographystyle{plain}

\bibliography{ref.bib}

\begin{thebibliography}{10}

\bibitem{bennett_de}
Patrick Bennett and Andrzej Dudek.
\newblock A gentle introduction to the differential equation method and dynamic
  concentration.
\newblock {\em Discrete Math.}, 345(12):Paper No. 113071, 17, 2022.

\bibitem{BFL2015}
Tom Bohman, Alan Frieze, and Eyal Lubetzky.
\newblock Random triangle removal.
\newblock {\em Adv. Math.}, 280:379--438, 2015.

\bibitem{Fata2019}
Elaheh Fata, Will Ma, and David Simchi{-}Levi.
\newblock Multi-stage and multi-customer assortment optimization with inventory
  constraints.
\newblock {\em CoRR}, abs/1908.09808, 2019.

\bibitem{feldman2021online}
Moran Feldman, Ola Svensson, and Rico Zenklusen.
\newblock Online contention resolution schemes with applications to bayesian
  selection problems.
\newblock {\em SIAM Journal on Computing}, 50(2):255--300, 2021.

\bibitem{freedman1975}
David~A. Freedman.
\newblock {On Tail Probabilities for Martingales}.
\newblock {\em The Annals of Probability}, 3(1):100 -- 118, 1975.

\bibitem{Fu2021}
Hu~Fu, Zhihao~Gavin Tang, Hongxun Wu, Jinzhao Wu, and Qianfan Zhang.
\newblock {Random Order Vertex Arrival Contention Resolution Schemes for
  Matching, with Applications}.
\newblock In Nikhil Bansal, Emanuela Merelli, and James Worrell, editors, {\em
  48th International Colloquium on Automata, Languages, and Programming (ICALP
  2021)}, volume 198 of {\em Leibniz International Proceedings in Informatics
  (LIPIcs)}, pages 68:1--68:20, Dagstuhl, Germany, 2021. Schloss Dagstuhl --
  Leibniz-Zentrum f{\"u}r Informatik.

\bibitem{Lee2018}
Euiwoong Lee and Sahil Singla.
\newblock {Optimal Online Contention Resolution Schemes via Ex-Ante Prophet
  Inequalities}.
\newblock In Yossi Azar, Hannah Bast, and Grzegorz Herman, editors, {\em 26th
  Annual European Symposium on Algorithms (ESA 2018)}, volume 112 of {\em
  Leibniz International Proceedings in Informatics (LIPIcs)}, pages
  57:1--57:14, Dagstuhl, Germany, 2018. Schloss Dagstuhl--Leibniz-Zentrum fuer
  Informatik.

\bibitem{macruryinduction2023}
Calum MacRury and Will Ma.
\newblock Random-order contention resolution via continuous induction:
  Tightness for bipartite matching under vertex arrivals.
\newblock In Bojan Mohar, Igor Shinkar, and Ryan O'Donnell, editors, {\em
  Proceedings of the 56th Annual {ACM} Symposium on Theory of Computing, {STOC}
  2024, Vancouver, BC, Canada, June 24-28, 2024}, pages 1629--1640. {ACM},
  2024.

\bibitem{MacRuryMG24}
Calum MacRury, Will Ma, and Nathaniel Grammel.
\newblock On (random-order) online contention resolution schemes for the
  matching polytope of (bipartite) graphs.
\newblock {\em Operations Research 0(0)}, 2024.

\bibitem{MichelPetrovitchSurUM}
M.~Michel Petrovitch.
\newblock Sur une mani{\`e}re d'{\'e}tendre le th{\'e}or{\`e}me de la moyenne
  aux {\'e}quations diff{\'e}rentielles du premier ordre.
\newblock {\em Mathematische Annalen}, 54:417--436, 1901.

\bibitem{TAW2007}
Andr\'{a}s Telcs, Nicholas Wormald, and Sanming Zhou.
\newblock Hamiltonicity of random graphs produced by 2-processes.
\newblock {\em Random Structures Algorithms}, 31(4):450--481, 2007.

\bibitem{walter}
Wolfgang Walter.
\newblock Differential inequalities and maximum principles: theory, new methods
  and applications.
\newblock In {\em Proceedings of the {S}econd {W}orld {C}ongress of {N}onlinear
  {A}nalysts, {P}art 8 ({A}thens, 1996)}, volume~30, pages 4695--4711, 1997.

\bibitem{warnke2019wormalds}
Lutz Warnke.
\newblock On wormald's differential equation method.
\newblock {\em Accepted to Combinatorics, Probability, and Computing},
  abs/1905.08928, 2019.

\bibitem{wormald1995}
Nicholas~C. Wormald.
\newblock {Differential Equations for Random Processes and Random Graphs}.
\newblock {\em The Annals of Applied Probability}, 5(4):1217 -- 1235, 1995.

\end{thebibliography}

\appendix

\end{document}